\documentclass[12pt]{article}

\usepackage{amsmath,amssymb,amsfonts,latexsym}
\usepackage{graphicx}
\usepackage{caption}
\usepackage{booktabs}
\usepackage{cite}
\usepackage[colorlinks=true,linkcolor=blue,citecolor=blue,urlcolor=black]{hyperref}

\newtheorem{thm}{Theorem}
\newtheorem{prop}{Proposition}[section]

\newtheorem{lem}[prop]{Lemma}

\newcommand{\pf}{\noindent{\it Proof.} }
\newcommand{\qed}{{\hfill$\square$}\medskip}

\def\Z{{\mathbb Z}}
\def\Q{{\mathbb Q}}

\numberwithin{equation}{section}
\allowdisplaybreaks

\begin{document}

\begin{center}
{\Large\bf Proofs of two $q$-congruence conjectures of Guo}
\end{center}

\vskip 2mm \centerline{Ji-Cai Liu and Qing-Yuan Tao}
\begin{center}
{\footnotesize Department of Mathematics, Wenzhou University, Wenzhou 325035, PR China\\[5pt]
{\tt jcliu2016@gmail.com, 15514228254@163.com} \\[10pt]
}
\end{center}

\vskip 0.7cm \noindent{\bf Abstract.}
We prove two conjectural $q$-congruences proposed by Guo. The first
is Conjecture 7.2 in Guo's work on $q$-analogues of two ``divergent''
Ramanujan-type supercongruences; it asserts a square-cyclotomic congruence for a
truncated $q$-analogue of a Ramanujan-type sum when $n\equiv1\pmod4$.  The
second is Conjecture 4.1 in Guo's extension of Van Hamme's $(A.2)$
supercongruence; it gives divisibility modulo $[n]$ for a family of truncated
basic hypergeometric sums with a parameter $s$.  The proof of the first result
relies on a known Watson-transformation congruence obtained by Guo.  The proof of the second result is based on
period decomposition at primitive roots of unity and a reflection cancellation
inside residue blocks.

\vskip 3mm \noindent {\it Keywords}: $q$-congruences; cyclotomic polynomials;
supercongruences; basic hypergeometric series; roots of unity.

\vskip 2mm
\noindent{\it MR Subject Classifications}: Primary 33D15; Secondary 11A07,
11B65.

\section{Introduction}

Ramanujan-type supercongruences and their $q$-analogues form an active part of
the study of truncated hypergeometric and basic hypergeometric series.  Van
Hamme~\cite{VanHamme1997} proposed a list of supercongruences for truncated
Ramanujan-type series, and subsequent work of Guillera and
Zudilin~\cite{GuilleraZudilin2012}, Sun~\cite{Sun2011}, Guo and
Zudilin~\cite{GuoZudilin2019}, and others showed that many of these congruences
admit refined cyclotomic $q$-analogues.  Such $q$-congruences replace ordinary
prime-power divisibility by divisibility by powers of cyclotomic polynomials;
when $q\to1$ and the modulus is specialized at prime values, they recover
classical supercongruences.

We shall use the following standard notation.  For $m\ge1$,
\begin{equation*}
  [m]=[m]_q=\frac{1-q^m}{1-q},
\end{equation*}
and, for $N\ge0$,
\begin{equation*}
  (a;q)_0=1,\qquad (a;q)_N=\prod_{j=0}^{N-1}(1-aq^j).
\end{equation*}
We also write
\begin{equation*}
  (a_1,a_2,\ldots,a_r;q)_N=(a_1;q)_N(a_2;q)_N\cdots(a_r;q)_N.
\end{equation*}
The $n$-th cyclotomic polynomial is defined by
\[
  \Phi_n(q)=\prod_{\substack{1\le j\le n\\ \gcd(j,n)=1}}(q-\zeta^j),
\]
where $\zeta$ is a primitive $n$-th root of unity.  Congruences of
rational functions are interpreted in the usual reduced-denominator sense: if
$F(q)=P(q)/Q(q)$ with $P,Q\in\Q[q]$, $\gcd(P,Q)=1$, and $Q$ is relatively prime
to the stated modulus, then
\begin{equation*}
  F(q)\equiv0\pmod{M(q)}
\end{equation*}
means that $M(q)$ divides $P(q)$ in $\Q[q]$.

The first conjecture treated here arose in Guo's study of $q$-analogues of two
``divergent'' Ramanujan-type supercongruences.  Guo~\cite[Conjecture 7.2]{GuoDivergent}
proposed the following square-cyclotomic congruence.  We prove
it as the first main theorem of the present paper.

\begin{thm}\label{thm:guo72}
Let $n>1$ be an integer with $n\equiv1\pmod4$.  Then
\begin{equation}\label{eq:thm1-main}
\sum_{k=0}^{(n-1)/2} [4k+1]
   \frac{(q;q^2)_k^3}{(q^2;q^2)_k^3}
   q^{k(n^2-2nk-n-2)/4}
   \equiv 0 \pmod{\Phi_n(q)^2}.
\end{equation}
\end{thm}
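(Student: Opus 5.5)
The plan is to use the creative-microscoping approach of Guo and Zudilin: introduce an extra parameter $a$, prove a two-variable congruence, and then let $a\to1$. The abstract says the proof rests on a Watson-transformation congruence of Guo, so we will use that result as the main input. Guo's method typically gives a congruence modulo $[n](1-aq^n)(a-q^n)$ for an $a$-deformed sum. Here the factor $\Phi_n(q)^2$ has to come from two separate sources. One factor comes from $[n]$. The other comes from the product $(1-aq^n)(a-q^n)$, which tends to $(1-q^n)^2$ as $a\to1$.

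Concretely, we consider the deformed sum
\[
S(a)=\sum_{k=0}^{(n-1)/2}[4k+1]\frac{(aq,q/a,q;q^2)_k}{(aq^2,q^2/a,q^2;q^2)_k}\,q^{k(n^2-2nk-n-2)/4},
\]
possibly with extra factors adjusted so that it matches Guo's result. The aim is to show that $S(a)\equiv0$ modulo $(1-aq^n)(a-q^n)$. To do this, set $a=q^{-n}$ (or $a=q^n$). The sum then terminates naturally at $k=(n-1)/2$, since $(q^{1-n};q^2)_k=0$ for $k>(n-1)/2$. It becomes a terminating very-well-poised $_8\phi_7$-type series in base $q^2$.

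The key point is the exponent $q^{k(n^2-2nk-n-2)/4}$. With $n\equiv1\pmod 4$ this exponent is an integer, and it should be exactly the power that appears when one takes the $b,c,d\to\infty$ or $b,c\to$ special-value limits in Watson's transformation with the parameters specialized to powers of $q^{n}$. Watson's transformation then turns the sum into a terminating $_4\phi_3$. We will show that this $_4\phi_3$ vanishes, either by the $q$-Pfaff--Saalschütz sum or by a reflection $k\mapsto (n-1)/2-k$ that pairs terms with opposite signs. The condition $n\equiv1\pmod4$ should control the parity of the length of the $_4\phi_3$ and produce the vanishing. This gives divisibility by $(1-aq^n)(a-q^n)$ for the $a$-version.

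Next we need divisibility by $\Phi_n(q)$ itself, which is independent of $a$. Here we will use the symmetry of the summand under $k\mapsto (n-1)/2-k$ modulo $\Phi_n(q)$. This is the standard argument: $(aq;q^2)_{(n-1)/2-k}/(q^2/a;q^2)_{(n-1)/2-k}$ relates to $(q^2/a;q^2)_k/(aq;q^2)_k$ modulo $\Phi_n$, up to an explicit power of $q$ and sign. The factor $[4k+1]$ maps to $-q^{\ast}[4k+1]$, so paired terms cancel. The middle term has $4k+1\equiv0 \pmod n$ when $k=(n-1)/4$, and this is an integer precisely because $n\equiv1\pmod4$. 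That term is divisible by $[n]$ on its own. Since $\gcd$ of $[n]$ and $(1-aq^n)(a-q^n)$ is $1$ as polynomials in $a$, we get
\[
S(a)\equiv0\pmod{[n](1-aq^n)(a-q^n)}.
\]
We then let $a\to1$. The denominators $(aq^2,q^2/a;q^2)_k$ with $k\le(n-1)/2$ contain no factor $\Phi_n(q)$ at $a=1$, so the limit is legitimate. It yields divisibility by $\Phi_n(q)\cdot\Phi_n(q)^2$, and this in particular gives the required $\Phi_n(q)^2$.

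The main obstacle is matching the unusual exponent $k(n^2-2nk-n-2)/4$ to a precise specialization of Guo's Watson-transformation congruence. In particular, the $k^2$ term $-nk^2/2$ must be produced by choosing parameters such as $b=q^{1-n}$ or letting some parameter go to infinity. We also have to check that the resulting $_4\phi_3$ really vanishes when $n\equiv1\pmod4$. We would first try to take Guo's known theorem and substitute $q\mapsto q^2$, $a\mapsto q$, $b\mapsto q$, $c\to\infty$, and then read off the power of $q$.
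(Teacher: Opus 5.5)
\textbf{There is a genuine gap, and it is fatal to the plan as formulated.} Suppose $S(a)\equiv0\pmod{[n](1-aq^n)(a-q^n)}$ held with $S(1)$ equal to the sum in the theorem. Setting $a=1$ would give divisibility by $[n](1-q^n)^2$, hence by $\Phi_n(q)^3$, since $(1-q^n)^2$ alone already contains $\Phi_n(q)^2$. That stronger statement is false. For $n=5$ the sum equals $1+[5]\,q^2/(1+q)^3+[9]\,(1+q+q^2)^3/\bigl(q(1+q)^6(1+q^2)^3\bigr)$, and its value at $q=1$ is $1075/512$ with $1075=5^2\cdot43$. The denominator is prime to $\Phi_5(q)$ and $\Phi_5$ is monic, so $\Phi_5(q)^3$ dividing the numerator would force $125\mid1075$. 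Hence no choice of extra factors that reduce to $1$ at $a=1$ can rescue the argument.

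For your specific deformation, the step that breaks is the vanishing at $a=q^{\pm n}$. For $n=5$,
\[
S(q^{-5})=1+q\,\frac{(1-q^4)(1-q^5)(1-q^6)}{(1-q^2)(1-q^3)(1-q^7)}+q^{-3}\,\frac{(1-q^6)(1-q^8)}{(1-q)(1-q^7)},
\]
which tends to $75/7\neq0$ as $q\to1$. The reason is structural. The weight $q^{k(n^2-2nk-n-2)/4}$ has quadratic part $q^{-nk^2/2}$. Limits of Watson's transformation in base $q^2$ only produce quadratic exponents that are integer multiples of $k(k-1)$. Specializing parameters to powers of $q^{\pm n}$ can mimic this weight only modulo $\Phi_n(q)$, which gives no exact identity at $a=q^{\pm n}$.

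\textbf{The missing idea} is to split the weight into a clean part and a part that is $\equiv1$ modulo $\Phi_n$. Put $q=x^2$ and $n=4h+1$. Then $q^{k(n^2-2nk-n-2)/4}=x^{-k}x^{nC_k}$ with $C_k=k(2h-k)=C_{2h-k}$, so the sum is $\sum_{k=0}^{2h}A_k(x)x^{nC_k}$, where $A_k(x)=[4k+1]_{x^2}\frac{(x^2;x^4)_k^3}{(x^4;x^4)_k^3}x^{-k}$.

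Guo's Watson-transformation congruence says $\sum_k A_k(x)\equiv0\pmod{\Phi_n(x)^2}$. Both factors of $\Phi_n$ come from there, not one from $[n]$ and one from a deformation.

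Your reflection $k\mapsto 2h-k$ is correct, and the paper uses it, but in a different role. It gives $A_k(x)+A_{2h-k}(x)\equiv0\pmod{\Phi_n(x)}$. Therefore
$\sum_k A_k(x)(x^{nC_k}-1)=\tfrac12\sum_k\bigl(A_k(x)+A_{2h-k}(x)\bigr)(x^{nC_k}-1)$
is a sum of products of two multiples of $\Phi_n(x)$, so it vanishes modulo $\Phi_n(x)^2$. This shows the original sum in $x$ is also $\equiv0\pmod{\Phi_n(x)^2}$.

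Finally, because $n$ is odd, $\Phi_n(x)^2\mid f(x^2)$ implies $\Phi_n(q)^2\mid f(q)$. This brings the congruence back to the variable $q$.
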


The congruence in \eqref{eq:thm1-main} is a $q$-analogue of a
truncated Ramanujan-type supercongruence.  Indeed, as $q\to1$,
\begin{equation*}
  [4k+1]\to4k+1,
  \qquad
  \frac{(q;q^2)_k}{(q^2;q^2)_k}
  =\prod_{j=0}^{k-1}\frac{1-q^{2j+1}}{1-q^{2j+2}}
  \to\frac{(1/2)_k}{k!},
\end{equation*}
and, for prime $n=p$, $\Phi_p(1)=p$.  Thus the prime case of
\eqref{eq:thm1-main} specializes to the corresponding congruence modulo $p^2$, namely
\[
  \sum_{k=0}^{(p-1)/2}(4k+1)\frac{(1/2)_k^3}{k!^3}
  \equiv 0 \pmod{p^2}
  \qquad(p\ \text{prime},\ p\equiv1\pmod4).
\]
The proof in Section~\ref{sec:proof-thm1} uses a square-modulus auxiliary
congruence due to Guo~\cite{Guo2025} together with a root-of-unity pairing
argument.

The second conjecture appears in Guo's work on a new extension of Van Hamme's
$(A.2)$ supercongruence~\cite{GuoVanHamme}.  For $0\le s\le(n-1)/2$ and
$k\ge s$, define
\begin{equation*}
\mathcal T_{s,k}(q)=
(-1)^k[4k+1]q^k
\frac{(q;q^2)_{k-s}(q;q^2)_{k+s}(q;q^2)_k^2(q^2;q^4)_k}
{(q^2;q^2)_{k-s}(q^2;q^2)_{k+s}(q^2;q^2)_k^2(q^4;q^4)_k}.
\end{equation*}
The second main theorem proves a conjecture due to Guo~\cite[Conjecture 4.1]{GuoVanHamme}.

\begin{thm}\label{thm:guo41}
Let $n>1$ be odd and let $0\le s\le(n-1)/2$.  If
\begin{equation}\label{eq:M-values}
  M\in\left\{\frac{n-1}{2},\, n-s-1,\, n-1\right\},
\end{equation}
then
\begin{equation}\label{eq:thm2-main}
  \sum_{k=s}^{M}\mathcal T_{s,k}(q)
  \equiv 0 \pmod{[n]}.
\end{equation}
\end{thm}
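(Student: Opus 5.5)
Since $[n]=\prod_{d\mid n,\,d>1}\Phi_d(q)$ with distinct factors, we will prove that the sum in \eqref{eq:thm2-main} vanishes at $q=\zeta$ for every primitive $d$-th root of unity $\zeta$ with $d\mid n$, $d>1$. The first step is to check that no term $\mathcal T_{s,k}(q)$ with $k\le M\le n-1$ has a pole at $\zeta$ that survives. The denominators $(q^2;q^2)_{k\pm s}$, $(q^2;q^2)_k^2$ and $(q^4;q^4)_k$ contain factors $\Phi_d$, since $d$ is odd and $k+s$ may exceed $d$. Counting $\Phi_d$-multiplicities, however, the numerator factors $(q;q^2)_{k\pm s}$, $(q;q^2)_k^2$ and $(q^2;q^4)_k$ supply at least as many. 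Indeed, $(q;q^2)_m/(q^2;q^2)_m=\binom{2m}{m}_q/(-q;q)_m^2$-type identities show that each ratio is $\Phi_d$-integral, except that the quotient $(q;q^2)_m/(q^2;q^2)_m$ acquires a simple pole when $m\ge d$ and the count of multiples of $d$ among odd numbers falls short. We must handle this carefully for $k+s\le n-1+s$. Concretely, the multiplicity of $\Phi_d$ in $(q;q^2)_m/(q^2;q^2)_m$ is $\lfloor (2m+d)/(2d)\rfloor-\lfloor m/d\rfloor\in\{0,1\}$, and it is never negative. So every term is $\Phi_d$-integral, and evaluation at $\zeta$ makes sense.

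Next we write $k=jd+r$ with $0\le r<d$. Using $q^d=1$ at $q=\zeta$, we get the $q$-Lucas-type factorization
\[
\frac{(q;q^2)_{jd+r}}{(q^2;q^2)_{jd+r}}\Big|_{q=\zeta}
=\binom{2j}{j}\frac{1}{4^j}\cdot\frac{(\zeta;\zeta^2)_r}{(\zeta^2;\zeta^2)_r},
\]
where the right factor vanishes for $(d+1)/2\le r<d$. There are analogous factorizations for the $q^2\to q^4$ ratio and for $[4k+1]$. The latter gives $[4k+1]_\zeta=[4r+1]_\zeta$, and likewise $(-1)^k\zeta^k$ splits as $(-1)^j\cdot(-1)^r\zeta^r$. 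This is the period decomposition: the sum over $k$ in a residue block $jd\le k<(j+1)d$ becomes a scalar in $j$ times an inner sum over $r$ of the same shape as $\mathcal T$, with $n$ replaced by $d$ and $s$ replaced by $s\bmod d$, possibly with a shift mixing $r\pm s$ across a block boundary. The truncation points $M\in\{(n-1)/2,\,n-s-1,\,n-1\}$ are chosen so that, after reduction, each block is either complete or ends exactly where the inner summand vanishes; for example, $(n-1)/2=jd+(d-1)/2$ when $n=(2j+1)d$.

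The core step, which we expect to be the main obstacle, is to show that each inner block sum vanishes at $\zeta$. We plan to use the reflection $r\mapsto (d-1)/2-r$, or $r\mapsto d-1-r-s'$ in the shifted blocks. Under $q=\zeta$, we have $[4((d-1)/2-r)+1]=[2d-1-4r]=-\zeta^{-4r-1}[4r+1]$. The products $(\zeta;\zeta^2)_r$ transform by the standard reflection
\[
(\zeta;\zeta^2)_{(d-1)/2-r}=\frac{(\zeta;\zeta^2)_{(d-1)/2}}{(\zeta^{-2r+d};\zeta^2)_r}\cdot(\text{explicit power of }\zeta),
\]
and similarly for the other factors. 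We will verify that the powers of $\zeta$ and the signs $(-1)^r$ combine to give $\mathcal T(r')=-\mathcal T(r)$, so that the terms cancel in pairs. A fixed point of the reflection has $[4r+1]_\zeta=0$ and contributes nothing. The delicate points are the parameter $s$, where the pairing of $(k-s,k+s)$ must be compatible with the reflection (we expect it to swap the two factors), and boundary blocks where $k-s<0$ or where the block is truncated at $n-s-1$. If a direct pairing fails there, we will first try to add the missing terms, which vanish at $\zeta$ because $(\zeta;\zeta^2)_m=0$ for $(d+1)/2\le m<d$, in order to complete the block before pairing.
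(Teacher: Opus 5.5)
Your route is the paper's: show the sum vanishes at every primitive $d$-th root $\zeta$ for $d\mid n$, $d=2m+1>1$, using the period factorization $H_{ud+t}(\zeta)=C_uH_t(\zeta)$ (zero for $t>m$) and a pairing $r\mapsto m-r$ whose fixed point is killed by $[d]_\zeta=0$. (Your first paragraph wavers on poles. In fact $(q;q^2)_N/(q^2;q^2)_N$ never has a pole at $\zeta$ for odd $d$, because $(-q;q)_N$ has only even-order cyclotomic factors. Your multiplicity formula is the correct statement.)

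However, the step where $s$ and the three values of $M$ actually enter is missing. A summand survives at $\zeta$ only if $\langle k-s\rangle_d$, $\langle k\rangle_d$ and $\langle k+s\rangle_d$ are all $\le m$. With $b=\langle s\rangle_d$, this forces $r=\langle k\rangle_d\in[b,m-b]$ if $b\le m$, and $r\in[c,m-c]$ with $c=d-b$ if $b>m$. Without this, your factorization of an aligned block as ``a scalar in $j$ times an inner sum'' is unjustified, because $\lfloor (k\pm s)/d\rfloor$ jumps inside the block. Only on this symmetric interval are those floors constant. That is what makes the constants $C_{\lfloor (k\pm s)/d\rfloor}$ cancel between $r$ and $m-r$, makes the reflection preserve the support, and sends $\langle k-s\rangle_d,\langle k+s\rangle_d$ to $m-\langle k+s\rangle_d,\ m-\langle k-s\rangle_d$, which is the swap you anticipated. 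Your alternative $r\mapsto d-1-r-s'$ is not correct; the reflection is always $r\mapsto m-r$. The power-of-$\zeta$ check you defer does close. Using $H_{m-x}(\zeta)=(-1)^m\zeta^{m^2+x}H_x(\zeta)$ and $K_{m-x}(\zeta)=(-1)^m\zeta^{2m^2+2x}K_x(\zeta)$, the ratio of paired terms is $-\zeta^{6m^2+m-1}=-\zeta^{(3m-1)d}=-1$.

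The same support description decides the truncations, and you checked only $M=(n-1)/2$. Take $M=n-s-1$ with $b=d-c>m$. The last aligned block stops at $r=c-1$, just before the support starts at $r=c$. Its terms vanish because $\langle k+s\rangle_d=d+r-c>m$, not because $\langle k\rangle_d>m$. Your fallback of completing the block with ``missing terms, which vanish at $\zeta$'' fails exactly here, since the omitted terms $r\in[c,m-c]$ are the nonzero ones. Similarly, the partial first block containing $k=s$ is harmless only for a specific reason. In the first case the support lies entirely in $r\ge b$, so it is fully included. In the second case it lies entirely in $r<b$, so it is outside the sum, and the included terms have $r\ge b>m$ and vanish. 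With this support analysis supplied, your outline becomes the paper's proof.
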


The rest of the paper is organized as follows.  Section~\ref{sec:proof-thm1}
proves Theorem~\ref{thm:guo72}.  Section~\ref{sec:proof-thm2} proves
Theorem~\ref{thm:guo41} by reducing the congruence modulo $[n]$ to cancellation
at primitive roots of unity of every order $d\mid n$.

\section{Proof of Theorem~\ref{thm:guo72}}\label{sec:proof-thm1}

We start with an auxiliary congruence, which is the case $m=0$ of Guo~\cite[Theorem 2.1, (2.2)]{Guo2025}.
\begin{lem}\label{prop:guo-aux}
Let $n>1$ and $n\equiv1\pmod4$.  Then
\begin{equation*}
\sum_{k=0}^{(n-1)/2} [4k+1]_{x^2}
   \frac{(x^2;x^4)_k^3}{(x^4;x^4)_k^3}x^{-k}
   \equiv 0 \pmod{\Phi_n(x)^2}.
\end{equation*}
\end{lem}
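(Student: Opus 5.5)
This is precisely the case $m=0$ of Guo~\cite[Theorem 2.1, (2.2)]{Guo2025}, so within the paper the proof is the specialization of that result. Here I set $m=0$ in Guo's statement and check that the summand, the truncation point $(n-1)/2$ and the modulus $\Phi_n(x)^2$ match the display above. If one wants to see why it holds, I would reprove it by Guo's ``creative microscoping'' method, as follows.

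Write $q=x^2$, so the summand is $[4k+1]_q (q;q^2)_k^3/(q^2;q^2)_k^3\, q^{-k/2}$. I would introduce a parameter $a$ and consider
\[
F(a)=\sum_{k=0}^{(n-1)/2}[4k+1]_{x^2}\frac{(ax^2,x^2/a,x^2;x^4)_k}{(x^4/a,ax^4,x^4;x^4)_k}\,x^{-k}.
\]
At $a=1$ this is the sum in the lemma.

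\emph{Step 1.} Specialize $a=x^{-2n}$ or $a=x^{2n}$. Then $(ax^2;x^4)_k$ or $(x^2/a;x^4)_k$ contains $(x^{2-2n};x^4)_k$, which vanishes for $k>(n-1)/2$. So $F$ is a genuinely terminating very-well-poised series with the correct truncation point.

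\emph{Step 2.} Apply Watson's $_8\phi_7$ transformation to this terminating series, in base $x^4$. Choose the remaining free parameters, after adding auxiliary parameters $b,c$ and letting them tend to $0$ or $\infty$, so that the weight $x^{-k}$ is reproduced. The transformed side should be a balanced $_4\phi_3$ multiplied by a prefactor containing a product such as $(x^{4};x^4)_{(n-1)/2}$ over $(x^{2};x^4)_{(n-1)/2}$ shifted by $n$. I expect this prefactor to vanish at $a=x^{\pm2n}$, and the hypothesis $n\equiv1\pmod4$ should make $(n-1)/2$ even, so that the relevant product contains a factor $1-x^{2n}$.

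\emph{Step 3.} Step 2 gives $F(a)\equiv0\pmod{(1-ax^{2n})(a-x^{2n})}$, where the reduction is modulo polynomials in $x$ since $x^{2n}\equiv1\pmod{\Phi_n(x)}$ is used. The denominators $(ax^4,x^4/a;x^4)_k$ with $k\le (n-1)/2$ are coprime to $\Phi_n(x)$ at $a=1$, because $4k<2n$ and $n$ is odd. Letting $a\to1$, the modulus $(1-ax^{2n})(a-x^{2n})$ tends to $(1-x^{2n})^2$, which contains $\Phi_n(x)^2$. This yields the lemma.

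The main obstacle is Step 2. One has to pick the Watson parameters and limiting process so that the half-power weight $x^{-k}=q^{-k/2}$ appears exactly. One also has to check that the prefactor really vanishes at both specializations $a=x^{\pm 2n}$; this is where $n\equiv1\pmod4$ enters. I would first try the choice used in Guo's paper: parameters $a,\,1/a,\,x^{2}$ and one parameter sent to infinity, with base $x^4$ and the nome scaled so that the limit produces the factor $x^{-k}$.
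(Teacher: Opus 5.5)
Your main argument, that the lemma is the case $m=0$ of Guo's Theorem~2.1, (2.2), is exactly what the paper does. The paper gives no proof beyond that citation, so as a proof of the lemma your proposal is fine.

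The optional reproof, however, has a genuine gap in Steps~2--3. The sum does not vanish at $a=x^{\pm2n}$, so $F(a)\equiv0\pmod{(1-ax^{2n})(a-x^{2n})}$ is false. Knowing only that $F(x^{\pm2n})\equiv0\pmod{\Phi_n(x)}$ does not give divisibility by $(1-ax^{2n})(a-x^{2n})$.

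For $m=0$ no Watson transformation is needed. $F(a)$ is a very-well-poised ${}_6\phi_5$ in base $x^4$ with parameters $x^2;\,ax^2,\,x^2/a,\,x^3$. The self-paired parameter $x^3$ (note $x^6/x^3=x^3$) produces the argument $x^{-1}$, so no limiting process is required. With $N=(n-1)/2$, the terminating ${}_6\phi_5$ summation gives
\[
F(x^{\pm2n})=V:=\frac{(x^6,\,x^{1-2n};x^4)_N}{(x^{4-2n},\,x^3;x^4)_N}\neq0 .
\]

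What Steps~1--2 actually yield is $F(a)\equiv V\pmod{(1-ax^{2n})(a-x^{2n})}$. After $a\to1$ this gives $F(1)\equiv V\pmod{\Phi_n(x)^2}$, and you still need $\Phi_n(x)^2\mid V$.

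A single factor $1-x^{2n}$, which is all you anticipate, would give only the congruence modulo $\Phi_n(x)$. Moreover, that factor (the last one in $(x^6;x^4)_N$) is present for every odd $n$, so it is not where $n\equiv1\pmod4$ enters. The hypothesis does two things:
\begin{itemize}
\item it supplies the second factor $1-x^{-n}$, which occurs in $(x^{1-2n};x^4)_N$ (at $j=(n-1)/4$) exactly when $n\equiv1\pmod4$;
\item it keeps $1-x^{n}$ out of the denominator $(x^3;x^4)_N$.
\end{itemize}
With this correction your microscoping argument does prove the lemma.
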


We also record the elementary fact that squaring permutes primitive roots of odd
order and preserves multiplicities.

\begin{lem}\label{lem:squaring}
Let $n$ be an odd positive integer, and let $f(q)\in\Q[q]$.  If $f(q^2)$ is relatively prime to
$\Phi_n(q)$, then $f(q)$ is relatively prime to $\Phi_n(q)$.  If
\begin{equation}\label{eq:square-divisibility}
  \Phi_n(q)^2\mid f(q^2),
\end{equation}
then $\Phi_n(q)^2\mid f(q)$.
\end{lem}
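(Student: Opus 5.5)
The plan is to argue with roots and multiplicities over $\mathbb{C}$. Since $\Phi_n(q)$ is squarefree and its roots are exactly the primitive $n$-th roots of unity, both assertions reduce to statements about the vanishing of $f$ at those roots, with multiplicity. The key fact is that for odd $n$ the map $\zeta\mapsto\zeta^2$ permutes the set of primitive $n$-th roots of unity. Indeed, if $\zeta$ has order $n$, then $\zeta^2$ has order $n/\gcd(n,2)=n$. The map is injective on this set: $\zeta^2=\eta^2$ gives $(\zeta/\eta)^2=1$, while $\zeta/\eta$ is an $n$-th root of unity with $n$ odd, so $\zeta=\eta$. A finite set mapped injectively into itself is mapped bijectively.

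For the first assertion, I argue by contraposition. Suppose $f(q)$ and $\Phi_n(q)$ are not coprime. Then, since $\Phi_n$ is irreducible over $\Q$ (or simply because a nontrivial common factor has a complex root), $f(\eta)=0$ for some primitive $n$-th root $\eta$. Choose $\zeta$ primitive with $\zeta^2=\eta$. Then $f(\zeta^2)=0$, so $f(q^2)$ shares the root $\zeta$ with $\Phi_n(q)$ and is therefore not coprime to it.

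For the second assertion, I use multiplicities. Let $g(q)=f(q^2)$, so $g'(q)=2qf'(q^2)$. The hypothesis $\Phi_n(q)^2\mid g(q)$, combined with $\Phi_n$ being squarefree, means that every primitive root $\zeta$ is a root of $g$ of multiplicity at least $2$. Hence $g(\zeta)=g'(\zeta)=0$, which gives $f(\zeta^2)=0$ and $2\zeta f'(\zeta^2)=0$. Since $\zeta\ne0$, this yields $f'(\zeta^2)=0$. As $\zeta$ ranges over all primitive $n$-th roots, so does $\zeta^2$, by the bijection above. Thus every primitive $n$-th root $\eta$ satisfies $f(\eta)=f'(\eta)=0$, i.e.\ $(q-\eta)^2\mid f(q)$ in $\mathbb{C}[q]$. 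These factors are pairwise coprime, so $\Phi_n(q)^2=\prod_\eta (q-\eta)^2$ divides $f(q)$ in $\mathbb{C}[q]$. Both polynomials lie in $\Q[q]$, so the divisibility also holds in $\Q[q]$, because division with remainder is field-independent.

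There is no serious obstacle. The only points needing care are the bijectivity of squaring, which is exactly where oddness of $n$ is used, and the descent of divisibility from $\mathbb{C}[q]$ to $\Q[q]$.
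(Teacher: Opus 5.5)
Your proof is correct and follows essentially the same route as the paper: both use that squaring permutes the primitive $n$-th roots of unity when $n$ is odd, and both then compare root multiplicities at those roots. The only difference is how the double root is detected. You use $g'(q)=2qf'(q^2)$, whereas the paper factors $q^2-\alpha=(q-\zeta)(q+\zeta)$ to show that the multiplicity of $\zeta$ in $f(q^2)$ equals the multiplicity of $\alpha=\zeta^2$ in $f(q)$.
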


\pf
Since $n$ is odd, the map $\zeta\mapsto\zeta^2$ is a bijection on the set of
primitive $n$-th roots of unity.  The coprimality assertion follows immediately
from this bijection.  For the divisibility assertion, let $\alpha$ be a primitive
$n$-th root of unity and choose a primitive $n$-th root $\zeta$ with
$\zeta^2=\alpha$.  If the multiplicity of $\alpha$ as a root of $f(q)$ is $m$,
then the multiplicity of $\zeta$ as a root of $f(q^2)$ is also $m$, because
$q^2-\alpha=(q-\zeta)(q+\zeta)$ and $q+\zeta$ does not vanish at $q=\zeta$.
Thus \eqref{eq:square-divisibility} forces $m\ge2$.  Since $\alpha$ was
arbitrary, $\Phi_n(q)^2\mid f(q)$.
\qed

The next lemma is the cancellation mechanism that supplies the second
cyclotomic factor in Theorem~\ref{thm:guo72}.

\begin{lem}\label{lem:thm1-pairing}
Let $n=4h+1$.  For $0\le k\le2h$, put
\begin{equation}\label{eq:Ak-def}
  A_k(x)= [4k+1]_{x^2}
   \frac{(x^2;x^4)_k^3}{(x^4;x^4)_k^3}x^{-k}.
\end{equation}
If $\alpha$ is a primitive $n$-th root of unity, then
\begin{equation}\label{eq:Ak-middle-zero}
  A_h(\alpha)=0,
\end{equation}
and
\begin{equation}\label{eq:Ak-pair}
  A_{2h-k}(\alpha)=-A_k(\alpha)\qquad (0\le k\le h-1).
\end{equation}
\end{lem}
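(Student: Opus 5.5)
Write $q=\alpha^2$. Since $n$ is odd, $q$ is again a primitive $n$-th root of unity, and
\[
A_k(\alpha)=[4k+1]_q\, r_k^3\,\alpha^{-k},\qquad r_k:=\frac{(q;q^2)_k}{(q^2;q^2)_k}.
\]
For $0\le k\le 2h$ the denominator $(q^2;q^2)_k$ only involves factors $1-q^{2j}$ with $2\le 2j\le 4h<n$. So it does not vanish at a primitive $n$-th root, and every $A_k(\alpha)$ is a well-defined complex number.

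\emph{The middle term.} For \eqref{eq:Ak-middle-zero}, take $k=h$. Then $[4h+1]_q=[n]_q=(1-q^n)/(1-q)=0$, since $q^n=1$ and $q\ne1$. The remaining factors are finite, so $A_h(\alpha)=0$.

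\emph{The pairing.} For \eqref{eq:Ak-pair}, I would use the reflection $1-q^{m}=-q^{m}(1-q^{n-m})$, valid because $q^n=1$. This is applied to each factor of $(q;q^2)_{2h-k}$ and of $(q^2;q^2)_{2h-k}$.

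1. In $(q;q^2)_{2h-k}$ the odd exponents $2j+1$ ($0\le j\le 2h-k-1$) are sent to the even numbers $n-2j-1$, which run over $2k+2,2k+4,\dots,4h$. Hence
\[
(q;q^2)_{2h-k}=(-1)^{2h-k}q^{E_k}\,\frac{(q^2;q^2)_{2h}}{(q^2;q^2)_k}.
\]
2. In $(q^2;q^2)_{2h-k}$ the even exponents are sent to the odd numbers $2k+1,\dots,4h-1$. This gives
\[
(q^2;q^2)_{2h-k}=(-1)^{2h-k}q^{F_k}\,\frac{(q;q^2)_{2h}}{(q;q^2)_k}.
\]
3. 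Dividing, the signs cancel and
\[
r_{2h-k}=q^{E_k-F_k}\,C\,r_k,\qquad C=\frac{(q^2;q^2)_{2h}}{(q;q^2)_{2h}}.
\]
4. The case $k=0$ of step 1 gives $(q;q^2)_{2h}=q^{E_0}(q^2;q^2)_{2h}$, so $C=q^{-E_0}$ is a pure power of $q$. Therefore $r_{2h-k}=q^{e_k}r_k$ with an explicit integer $e_k$ given by arithmetic-progression sums.
5. For the linear factor, $4(2h-k)+1=2n-1-4k$, so
\[
[4(2h-k)+1]_q=\frac{1-q^{-1-4k}}{1-q}=-q^{-4k-1}[4k+1]_q.
\]
6. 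Combining these, $A_{2h-k}(\alpha)=-\alpha^{N_k}A_k(\alpha)$ for an explicit integer
\[
N_k=2(3e_k-4k-1)-(2h-k)+k.
\]

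\emph{The main obstacle.} The remaining and hardest step is pure bookkeeping: show that $N_k\equiv0\pmod n$.

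- First compute $E_k$ and $F_k$ as sums of exponents: $E_k=\sum(2j+1)$ over $0\le j\le 2h-k-1$, and $F_k$ is the analogous sum of the even exponents. This gives $e_k$ as an explicit quadratic expression in $h$ and $k$.
- Then reduce $N_k$ modulo $n=4h+1$, using $4h\equiv-1$. Any division by $2$ or $4$ is legitimate because $\alpha$ has odd order.

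If the exponent does not come out to $0$ modulo $n$, I would recheck the sign $(-1)^{2h-k}$ and the direction of the reflection. I expect, though I have not verified it, that the quadratic terms in $k$ cancel and the linear terms combine to a multiple of $4h+1$. This is exactly where the factor $x^{-k}$ in \eqref{eq:Ak-def} should be calibrated.
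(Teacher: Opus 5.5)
\textbf{Verdict: gap.} Every step you carry out is correct, but the proposal stops before the one verification that is the content of the lemma, so as written it does not prove the pairing identity.

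Your route is essentially the paper's. Both use the reflection $1-q^m=-q^m(1-q^{n-m})$ at $q=\alpha^2$ and the identity $[2n-1-4k]_q=-q^{-4k-1}[4k+1]_q$. The paper applies the reflection directly to the quotient
\[
\frac{r_{2h-k}}{r_k}=\prod_{i=k}^{2h-k-1}\frac{1-q^{2i+1}}{1-q^{2i+2}}.
\]
Its denominator exponents $2k+2,\dots,4h-2k$ are exactly the complements modulo $n$ of its numerator exponents, so it gets $r_{2h-k}/r_k=q^{4h(h-k)}$ at once. Your detour through $(q;q^2)_{2h}$, $(q^2;q^2)_{2h}$ and the constant $C$ is longer but equally valid. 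It needs $(q;q^2)_{2h}\neq0$, which holds because its exponents are odd and at most $4h-1<n$.

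\emph{The gap.} You state $N_k\equiv0\pmod n$ only as an expectation that you have not verified, and you add a fallback plan in case it fails. The computation does close, and more cleanly than you predict. Summing the arithmetic progressions gives $E_k=(2h-k)^2$, $F_k=(2h-k)(2h-k+1)$ and $E_0=4h^2$. Hence
\[
e_k=E_k-F_k-E_0=-(2h-k)-4h^2 .
\]
This matches the paper's exponent, since $-(2h-k)-4h^2-4h(h-k)=-(2h-k)(4h+1)$. Substituting into your formula gives
\[
N_k=6e_k-6k-2h-2=-24h^2-14h-2=-2(3h+1)(4h+1).
\]
So the dependence on $k$ cancels identically, and $\alpha^{N_k}=1$, i.e.\ $A_{2h-k}(\alpha)=-A_k(\alpha)$.

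With these lines added your argument is complete. Your remark about dividing exponents by $2$ or $4$ is unnecessary, because every exponent involved is an integer.
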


\pf
All denominators in the values $A_r(\alpha)$ with $0\le r\le2h$ are nonzero:
they are products of factors $1-\alpha^{4s}$ with $1\le s\le r\le2h$, and
$n\mid4s$ is impossible because $n$ is odd and $0<s<n$.  Since
$[4h+1]_{\alpha^2}=[n]_{\alpha^2}=0$, \eqref{eq:Ak-middle-zero} follows.

Let $0\le k\le h-1$ and put $j=2h-k$.  Set $\beta=\alpha^2$, which is again a
primitive $n$-th root of unity.  Since $4j+1=2n-(4k+1)$, we have
\begin{equation}\label{eq:qint-ratio-thm1}
 \frac{[4j+1]_{\alpha^2}}{[4k+1]_{\alpha^2}}
 =\frac{1-\beta^{2n-(4k+1)}}{1-\beta^{4k+1}}
 =-\beta^{-(4k+1)}=-\alpha^{-8k-2}.
\end{equation}
It remains to compare the quotient of the shifted-factorial parts.  Define
\begin{equation*}
  P_k=\prod_{i=k}^{j-1}\frac{1-\alpha^{4i+2}}{1-\alpha^{4i+4}}
      =\prod_{i=k}^{j-1}\frac{1-\beta^{2i+1}}{1-\beta^{2i+2}}.
\end{equation*}
The denominator exponents are
\begin{equation*}
  2k+2,\,2k+4,\ldots,4h-2k,
\end{equation*}
and these are precisely the complements modulo $n=4h+1$ of the numerator
exponents $2i+1$, $k\le i\le j-1$, because
\begin{equation*}
  n-(2i+1)=4h-2i.
\end{equation*}
Therefore
\begin{equation*}
  P_k=\prod_{i=k}^{j-1}\frac{1-\beta^{2i+1}}{1-\beta^{-(2i+1)}}
      =\prod_{i=k}^{j-1}(-\beta^{2i+1}).
\end{equation*}
There are $j-k=2h-2k$ factors, an even number, and
\begin{equation*}
  \sum_{i=k}^{j-1}(2i+1)=4h(h-k).
\end{equation*}
Hence
\begin{equation}\label{eq:Pk-value}
  P_k=\beta^{4h(h-k)}=\alpha^{8h(h-k)}=\alpha^{2h+1+2k},
\end{equation}
where the last equality uses
\begin{equation*}
  8h(h-k)-(2h+1+2k)=(4h+1)(2h-1-2k).
\end{equation*}
Combining \eqref{eq:qint-ratio-thm1} and \eqref{eq:Pk-value}, and including the
factor $x^{-r}$ in \eqref{eq:Ak-def}, gives
\begin{equation}\label{eq:Ak-quotient}
\frac{A_j(\alpha)}{A_k(\alpha)}
 =\left(-\alpha^{-8k-2}\right)
   \left(\alpha^{2h+1+2k}\right)^3
   \alpha^{-(j-k)}.
\end{equation}
The exponent of $\alpha$ in \eqref{eq:Ak-quotient} is
\begin{equation*}
  -8k-2+3(2h+1+2k)-(2h-2k)=4h+1=n.
\end{equation*}
Thus $A_j(\alpha)/A_k(\alpha)=-\alpha^n=-1$, proving
\eqref{eq:Ak-pair}.
\qed

\noindent{\it Proof of Theorem~\ref{thm:guo72}.}
Put $n=4h+1$ and define
\begin{equation*}
  S_n(q)=\sum_{k=0}^{2h} [4k+1]
   \frac{(q;q^2)_k^3}{(q^2;q^2)_k^3}
   q^{k(n^2-2nk-n-2)/4}.
\end{equation*}
The exponent in each summand is an integer.  Indeed, since $n\equiv1\pmod4$,
\begin{equation*}
  k(n^2-2nk-n-2)\equiv -2k(k+1)\equiv0\pmod4.
\end{equation*}
The only possible denominator factors, apart from powers of $q$, are
$1-q^{2s}$ with $1\le s\le2h$, and these are coprime to $\Phi_n(q)$.

After substituting $q=x^2$, the exponent identity
\begin{align*}
 2\cdot\frac{k(n^2-2nk-n-2)}{4}
 &=\frac{k\bigl((4h+1)^2-2(4h+1)k-(4h+1)-2\bigr)}{2}\notag\\
 &=k\bigl(8h^2+2h-1-(4h+1)k\bigr)\notag\\
 &=-k+(4h+1)k(2h-k)
\end{align*}
shows that
\begin{equation*}
  S_n(x^2)=\sum_{k=0}^{2h} A_k(x)x^{nC_k},
  \qquad C_k=k(2h-k),
\end{equation*}
where $A_k(x)$ is defined by \eqref{eq:Ak-def}.

Note that
\begin{align*}
 S_n(x^2)&=\frac{1}{2}\left(\sum_{k=0}^{2h} A_k(x)x^{nC_k}+\sum_{k=0}^{2h} A_{2h-k}(x)x^{nC_k}\right)\notag\\
&=\frac{1}{2}\sum_{k=0}^{2h}\left(A_k(x)+A_{2h-k}(x)\right)x^{nC_k}.
\end{align*}
By Lemma~\ref{lem:thm1-pairing}, we have $A_k(x)+A_{2h-k}(x)\equiv 0\pmod{\Phi_n(x)}$ for $0\le k\le 2h$. It follows that
\begin{align}
 S_n(x^2)&\equiv\frac{1}{2}\sum_{k=0}^{2h}\left(A_k(x)+A_{2h-k}(x)\right)\notag\\
 &=\sum_{k=0}^{2h}A_k(x)\notag\\
 &\equiv 0 \pmod{\Phi_n(x)^2},\label{new-1}
\end{align}
where we have used Lemma \ref{prop:guo-aux} and the fact that $x^{nC_k}\equiv 1\pmod{\Phi_n(x)}$.

Finally, combining Lemma \ref{lem:squaring} and \eqref{new-1} completes the proof of Theorem~\ref{thm:guo72}.
\qed

\section{Proof of Theorem~\ref{thm:guo41}}\label{sec:proof-thm2}

Throughout this section let $d=2m+1>1$ be odd and let $\zeta$ be a primitive
$d$-th root of unity.  For $N\ge0$, set
\begin{equation*}
  H_N(q)=\frac{(q;q^2)_N}{(q^2;q^2)_N},\qquad
  K_N(q)=\frac{(q^2;q^4)_N}{(q^4;q^4)_N},
\end{equation*}
and
\begin{equation*}
  C_u=\prod_{j=0}^{u-1}\frac{2j+1}{2j+2}=\frac{(1/2)_u}{u!}\qquad(u\ge0).
\end{equation*}

\begin{lem}\label{lem:period}
Let $N=ud+t$, where $u\ge0$ and $0\le t<d$.  Then, as limits when
$q\to\zeta$,
\begin{equation}\label{eq:period-H}
H_N(\zeta)=
\begin{cases}
C_uH_t(\zeta),&0\le t\le m,\\
0,&m<t<d,
\end{cases}
\end{equation}
and
\begin{equation}\label{eq:period-K}
K_N(\zeta)=
\begin{cases}
C_uK_t(\zeta),&0\le t\le m,\\
0,&m<t<d.
\end{cases}
\end{equation}
In particular, these rational functions have finite limits at $\zeta$.
\end{lem}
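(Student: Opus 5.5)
We prove the statement by splitting the product defining $H_N$ into complete blocks of length $d$ plus a partial block, and evaluating each piece at $q=\zeta$. We treat $H_N$ first; $K_N$ is handled the same way. Write
\[
H_N(q)=\prod_{j=0}^{N-1}\frac{1-q^{2j+1}}{1-q^{2j+2}} .
\]
We group the indices as $j=ld+r$ with $0\le l<u$, $0\le r<d$ (the full blocks), together with the final partial block $j=ud+r$, $0\le r<t$.

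The key observation concerns where the factors vanish at $\zeta$.
\begin{itemize}
\item A numerator factor $1-q^{2j+1}$ vanishes at $\zeta$ iff $d\mid 2j+1$. Since $d$ is odd, this means $2j+1\equiv0\pmod{2d}$ would be impossible, so the condition is $2j+1\equiv d\pmod{2d}$, i.e.\ $j\equiv m\pmod d$.
\item A denominator factor $1-q^{2j+2}$ vanishes iff $j\equiv d-1\pmod d$.
\end{itemize}
So each full block contains exactly one vanishing numerator factor ($r=m$) and one vanishing denominator factor ($r=d-1$).

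For a full block with index $l$, we pair these two singular factors. Writing $q=\zeta e^{\epsilon}$ (or using $1-q^{a}\sim a(1-q)$ locally, valid because $\zeta^{a}=1$), we get
\[
\frac{1-q^{2(ld+m)+1}}{1-q^{2(ld+d-1)+2}}\to\frac{(2l+1)d}{(2l+2)d}=\frac{2l+1}{2l+2}.
\]
The remaining factors of the block, with $r\ne m,\,d-1$, have nonzero values. Their product over $r$ gives
\[
\frac{\prod_{r\ne m}(1-\zeta^{2r+1})}{\prod_{r\ne d-1}(1-\zeta^{2r+2})}.
\]
As $r$ runs over $0,\dots,d-1$, both $2r+1$ and $2r+2$ run over all residues mod $d$. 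Hence numerator and denominator are each equal to $\prod_{a\not\equiv0}(1-\zeta^a)$, and the ratio is $1$. The $u$ full blocks therefore contribute exactly $C_u$.

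For the partial block, the factor with index $ud+r$ specializes to the factor with index $r$ at $\zeta$, because $q^{d}\to1$. The limiting value is then as follows.
\begin{itemize}
\item If $t\le m$, no singular factor occurs, and the partial block gives $H_t(\zeta)$.
\item If $m<t<d$, the block contains the vanishing numerator factor at $r=m$ but not the denominator one at $r=d-1$. Since $t-1<d-1$, the limit is $0$.
\end{itemize}
This proves \eqref{eq:period-H}, and it also shows the limits are finite.

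For $K_N$, the factors are $(1-q^{4j+2})/(1-q^{4j+4})$. The numerator vanishes at $\zeta$ iff $d\mid 2j+1$, i.e.\ $j\equiv m$; the denominator vanishes iff $j\equiv d-1$. Pairing gives the limit ratio $(2(2l+1)d)/(2(2l+2)d)=(2l+1)/(2l+2)$. Since $d$ is odd, $4r+2$ and $4r+4$ again run over all residues mod $d$, so the nonsingular parts cancel exactly as before, and the partial-block analysis is identical.

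The main care point is justifying the limit of the paired singular factors, which we do via $(1-q^{a})/(1-q^{b})\to a/b$ whenever $\zeta^a=\zeta^b=1$. Beyond that, the only step needing attention is verifying the residue bijections, which relies on $d$ being odd.
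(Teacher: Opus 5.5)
Your proof is correct and is essentially the paper's argument: in each full block the unique numerator zero ($r=m$) and denominator zero ($r=d-1$) pair to give $(2l+1)/(2l+2)$, the remaining factors cancel because their exponents run over the same nonzero residues modulo $d$, and the partial block is decided by whether it reaches the index $m$. One cosmetic slip: near $\zeta$ the correct local form is $1-q^{a}\sim a(1-q/\zeta)$ rather than $a(1-q)$, but your stated conclusion $(1-q^{a})/(1-q^{b})\to a/b$ whenever $\zeta^a=\zeta^b=1$ is right.
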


\pf
We first treat $H_N$.  In a complete block $i=jd,jd+1,\ldots,jd+d-1$ of the
product defining $H_N$, the unique numerator zero occurs at $i=jd+m$ because
\begin{equation*}
  1+2(jd+m)=(2j+1)d,
\end{equation*}
whereas the unique denominator zero occurs at $i=jd+d-1$ because
\begin{equation*}
  2+2(jd+d-1)=(2j+2)d.
\end{equation*}
The ratio of the two zero factors is
\begin{equation*}
\lim_{q\to\zeta}\frac{1-q^{(2j+1)d}}{1-q^{(2j+2)d}}
=\frac{2j+1}{2j+2}.
\end{equation*}
After these two zero factors are removed, the remaining numerator and denominator
exponents run through the same nonzero residue classes modulo $d$; hence the
remaining products cancel at $q=\zeta$.  Thus the $j$-th complete block
contributes $(2j+1)/(2j+2)$.

The final incomplete block has length $t$.  If $t\le m$, it contains no zero
factor and contributes $H_t(\zeta)$.  If $t>m$, it contains the numerator zero
but no denominator zero, and the value is $0$.  This proves \eqref{eq:period-H}.

For $K_N$, the same argument applies.  In the $j$-th complete block the numerator
zero occurs at $i=jd+m$ and the denominator zero occurs at $i=jd+d-1$, since
\begin{equation*}
  2+4(jd+m)=(4j+2)d,
  \qquad
  4+4(jd+d-1)=(4j+4)d.
\end{equation*}
Their ratio is again $(2j+1)/(2j+2)$.  The remaining exponents run through the
same nonzero residue classes modulo $d$, because multiplication by $2$ permutes
$\Z/d\Z$.  The final incomplete block is handled exactly as above, which proves
\eqref{eq:period-K}.
\qed

\begin{lem}\label{lem:reflection}
For $0\le x\le m$,
\begin{equation}\label{eq:H-reflection}
  H_{m-x}(\zeta)=(-1)^m\zeta^{m^2+x}H_x(\zeta),
\end{equation}
and
\begin{equation}\label{eq:K-reflection}
  K_{m-x}(\zeta)=(-1)^m\zeta^{2m^2+2x}K_x(\zeta).
\end{equation}
Moreover, for $0\le r\le m$,
\begin{equation*}
  [4(m-r)+1]_{\zeta}=-\zeta^{-(4r+1)}[4r+1]_{\zeta}.
\end{equation*}
\end{lem}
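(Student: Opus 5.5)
The plan is to prove all three identities by direct manipulation of finite products at $q=\zeta$, using $\zeta^d=1$ with $d=2m+1$. For $0\le x\le m$ none of the factors appearing in $H_x,H_{m-x},K_x,K_{m-x}$ vanishes. The numerator exponents $2i+1$ and $4i+2$ with $i\le m-1$ are at most $4m-2<2d$ and are never multiples of $d$, since $2i+1<d$ and $4i+2=2(2i+1)$ with $d$ odd. The denominator exponents $2i+2$ and $4i+4$ likewise avoid multiples of $d$. So we can work with honest values and ratios.

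\textbf{The $q$-integer identity.} Since $4(m-r)+1=2d-(4r+1)$, we have
\[
\frac{[4(m-r)+1]_\zeta}{[4r+1]_\zeta}=\frac{1-\zeta^{-(4r+1)}}{1-\zeta^{4r+1}}=-\zeta^{-(4r+1)}.
\]
This is the same computation as \eqref{eq:qint-ratio-thm1}.

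\textbf{Reflection for $H$.} I would prove \eqref{eq:H-reflection} by induction on $x$, or equivalently by writing $H_{m-x}(\zeta)/H_x(\zeta)$ as a single product. Take $x<m-x$ without loss of generality; the case $x>m-x$ is the same identity read backwards, and this consistency should be checked. Then
\[
\frac{H_{m-x}}{H_x}=\prod_{i=x}^{m-x-1}\frac{1-\zeta^{2i+1}}{1-\zeta^{2i+2}}.
\]
The denominator exponents are $2x+2,\dots,2m-2x$. Under $e\mapsto d-e$ they become $2m-2x-1,\dots,2x+1$, which is exactly the set of numerator exponents. Hence each numerator factor pairs with a denominator factor $1-\zeta^{-(2i+1)}$, and the ratio of the pair is $-\zeta^{2i+1}$. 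The product is then $(-1)^{m-2x}\zeta^{S}$ with $S=\sum_{i=x}^{m-x-1}(2i+1)=(m-x)^2-x^2=m^2-2mx$. Since $-2mx\equiv x\pmod d$ (because $2m\equiv-1$), this gives $(-1)^m\zeta^{m^2+x}$, as required.

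A cleaner route that handles all $x$ uniformly is to prove the one-step ratio $H_{m-x-1}/H_{m-x}$ against $H_{x+1}/H_x$ and induct from the base case $x=0$. The base case $H_m(\zeta)=(-1)^m\zeta^{m^2}$ follows from the same pairing argument over the full range $i=0,\dots,m-1$. I would present it this way to avoid the case split.

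\textbf{Reflection for $K$.} The identity \eqref{eq:K-reflection} is the $H$-identity with $q$ replaced by $q^2$: we have $K_N(\zeta)=H_N(\zeta^2)$, and $\zeta^2$ is again a primitive $d$-th root of unity. Applying the $H$-identity at $\zeta^2$ gives the factor $(\zeta^2)^{m^2+x}=\zeta^{2m^2+2x}$.

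\textbf{Main obstacle.} The only delicate point is the exponent bookkeeping modulo $d$, in particular the reduction $-2mx\equiv x$ and the sign parity. I would verify these on a small case, say $d=5$, as a check.
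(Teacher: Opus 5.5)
Your argument is correct and is essentially the paper's proof. The paper works with $R_x(a,q)=(aq;q^2)_x/(q^2/a;q^2)_x$ modulo $\Phi_d(q)$, of which your computation is the case $a=1$. It pairs each denominator factor with a numerator factor via $2m-2i=d-(2i+1)$ exactly as you do, and reduces $m(m-2x)\equiv m^2+x \pmod d$. It treats $x>m/2$ by the reciprocal consistency $(m^2+x)+(m^2+m-x)=md$ that you flagged; your one-step induction from $H_m(\zeta)=(-1)^m\zeta^{m^2}$ is an equally valid way to avoid that split. The $K$-identity and the $q$-integer identity are obtained just as you do.

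The only blemish is that you state the $q$-integer identity as a ratio, which divides by zero when $4r+1=d$. Write it multiplicatively using $1-\zeta^{-e}=-\zeta^{-e}(1-\zeta^{e})$; in the case $4r+1=d$ one has $m-r=r$, so both sides vanish.
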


\pf
No denominator factor in $H_x(\zeta)$, $H_{m-x}(\zeta)$, $K_x(\zeta)$, or
$K_{m-x}(\zeta)$ is zero.  We first record the product reversal used below.  Put
\[
  R_x(a,q)=\frac{(aq;q^2)_x}{(q^2/a;q^2)_x}.
\]
We claim that
\begin{equation}\label{eq:R-reflection}
  R_{m-x}(a,q)\equiv
  (-a)^{m-2x}q^{m^2+x}R_x(a,q)
  \pmod{\Phi_d(q)}.
\end{equation}
It is enough to prove this first for \(0\le x\le m/2\).  In that case, modulo
\(\Phi_d(q)\), the denominator factors may be reversed by replacing
\(i\) with \(m-1-i\):
\begin{align*}
\frac{R_{m-x}(a,q)}{R_x(a,q)}
&=\prod_{i=x}^{m-x-1}\frac{1-aq^{2i+1}}{1-q^{2i+2}/a}  \\
&\equiv
  \prod_{i=x}^{m-x-1}\frac{1-aq^{2i+1}}{1-q^{-(2i+1)}/a} \\
&=\prod_{i=x}^{m-x-1}(-a q^{2i+1})                         \\
&=(-a)^{m-2x}q^{m(m-2x)}
 \equiv (-a)^{m-2x}q^{m^2+x},
\end{align*}
because \(m(m-2x)-(m^2+x)=-xd\).  If \(x>m/2\), then applying the just-proved
case to \(m-x\) and taking reciprocals gives the same formula, since
\((m^2+x)+(m^2+m-x)=md\).  This proves \eqref{eq:R-reflection}.  Equivalently,
\begin{equation*}
\frac{(aq;q^2)_{m-x}}{(q^2/a;q^2)_{m-x}}
\equiv
(-a)^{m-2x}q^{m^2+x}
\frac{(aq;q^2)_x}{(q^2/a;q^2)_x}
\pmod{\Phi_d(q)}.
\end{equation*}
Taking \(a=1\) and \(q=\zeta\) gives \eqref{eq:H-reflection}.  Replacing
\(q\) by \(q^2\) in \eqref{eq:H-reflection} gives \eqref{eq:K-reflection},
because \(\zeta^2\) is again a primitive \(d\)-th root of unity.  Finally,
$4(m-r)+1=2d-(4r+1)$, and therefore
\begin{equation*}
[4(m-r)+1]_{\zeta}
=\frac{1-\zeta^{-(4r+1)}}{1-\zeta}
=-\zeta^{-(4r+1)}[4r+1]_{\zeta}.
\end{equation*}
This proves the lemma.
\qed

\begin{lem}\label{lem:finite}
Let $d>1$, and let $F(q)=P(q)/Q(q)\in\Q(q)$, where $P,Q\in\Q[q]$ are coprime.
If $F(q)$ has a finite limit at every primitive $d$-th root of unity, then
$\Phi_d(q)\nmid Q(q)$.  If, in addition, $F(\zeta)=0$ for every primitive
$d$-th root of unity $\zeta$, then $\Phi_d(q)\mid P(q)$.
\end{lem}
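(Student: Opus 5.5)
Both assertions follow from unique factorization in $\Q[q]$, together with the facts that $\Phi_d(q)$ is irreducible over $\Q$ and has simple roots, namely the primitive $d$-th roots of unity. The plan is to argue by contradiction for the first claim and then read off the second directly.

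For the first assertion, suppose $\Phi_d(q)\mid Q(q)$. Fix a primitive $d$-th root of unity $\zeta$; then $Q(\zeta)=0$. Since $\gcd(P,Q)=1$ in $\Q[q]$, Bézout gives $U,V\in\Q[q]$ with $UP+VQ=1$. Evaluating at $\zeta$ yields $U(\zeta)P(\zeta)=1$, so $P(\zeta)\neq0$. Write $Q(q)=(q-\zeta)^e\widetilde Q(q)$ with $e\ge1$ and $\widetilde Q(\zeta)\ne0$. Then near $\zeta$ we have $|F(q)|=|P(q)|/(|q-\zeta|^e|\widetilde Q(q)|)\to\infty$, because the numerator tends to the nonzero value $P(\zeta)$. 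This contradicts the assumption that $F$ has a finite limit at $\zeta$, so $\Phi_d(q)\nmid Q(q)$.

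For the second assertion, irreducibility of $\Phi_d$ gives $\gcd(\Phi_d,Q)=1$, since $\Phi_d\nmid Q$. Hence $Q$ vanishes at no primitive $d$-th root of unity: a common root $\zeta$ would make the minimal polynomial $\Phi_d$ of $\zeta$ divide $Q$. Therefore $F(\zeta)=P(\zeta)/Q(\zeta)$ for each primitive $\zeta$. The hypothesis $F(\zeta)=0$ then forces $P(\zeta)=0$. Because $P\in\Q[q]$ vanishes at a root of the irreducible polynomial $\Phi_d$, we get $\Phi_d\mid P$. In fact a single primitive root already suffices.

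No step is a genuine obstacle. The only point needing care is the first part: the finite-limit hypothesis must be converted into nonvanishing of $Q$. This uses coprimality to ensure $P(\zeta)\ne0$, so that the pole of $F$ at $\zeta$ cannot be cancelled.
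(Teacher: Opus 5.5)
Your proof is correct and takes essentially the paper's route: the paper argues that a finite limit at a zero of $Q$ forces $P$ to vanish there, so $\Phi_d(q)$ would divide both $P$ and $Q$; this is the contrapositive of your Bézout step, which shows $P(\zeta)\neq0$ and hence that $F$ has a pole at $\zeta$. Your second part matches the paper's as well, and your remark that irreducibility of $\Phi_d(q)$ over $\Q$ lets a single primitive root suffice is a harmless strengthening.
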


\pf
If $\Phi_d(q)$ divided $Q(q)$, then the finite-limit hypothesis would force
$P(q)$ to vanish at every primitive $d$-th root of unity.  Hence
$\Phi_d(q)$ would divide both $P(q)$ and $Q(q)$, contradicting their
coprimality.  Thus $\Phi_d(q)\nmid Q(q)$.  The second assertion follows at once:
then $Q(\zeta)\ne0$ for every primitive $d$-th root, and the vanishing of
$F(\zeta)$ for all such $\zeta$ implies that $P(q)$ is divisible by
$\Phi_d(q)$.
\qed

We now fix a divisor $d=2m+1$ of $n$, with $d>1$, and write
\begin{equation}\label{eq:s-adb}
  s=ad+b,
  \qquad 0\le b<d.
\end{equation}
For an integer $x$, let $\langle x\rangle_d$ denote its least nonnegative residue
modulo $d$.  By Lemma~\ref{lem:period}, the factor $H_N(\zeta)$ or $K_N(\zeta)$
vanishes whenever $\langle N\rangle_d>m$.  Hence
\begin{equation}\label{eq:T-nonzero-condition}
  \mathcal T_{s,k}(\zeta)=0
\end{equation}
unless
\begin{equation}\label{eq:three-residue-conditions}
\langle k-s\rangle_d\le m,
\qquad
\langle k\rangle_d\le m,
\qquad
\langle k+s\rangle_d\le m.
\end{equation}
If $r=\langle k\rangle_d$, then, since $s\equiv b\pmod d$, conditions
\eqref{eq:three-residue-conditions} become
\begin{equation}\label{eq:r-residue-conditions}
  \langle r-b\rangle_d\le m,\qquad
  r\le m,\qquad
  \langle r+b\rangle_d\le m .
\end{equation}
We claim that these conditions are equivalent to $r\in\mathcal R_b$, where
\begin{equation}\label{eq:Rb-def}
\mathcal R_b=
\begin{cases}
\{b,b+1,\ldots,m-b\},&
0\le b\le \lfloor m/2\rfloor,\\[2mm]
\{c,c+1,\ldots,m-c\},&
c=d-b,\ 1\le c\le \lfloor m/2\rfloor,\\[2mm]
\varnothing,&\text{otherwise}.
\end{cases}
\end{equation}
To prove the claim, first suppose that $0\le b\le m$.  Because the middle
condition in \eqref{eq:r-residue-conditions} gives $0\le r\le m$, the condition
\(\langle r-b\rangle_d\le m\) holds exactly when \(r\ge b\): if \(r<b\), then
\[
  \langle r-b\rangle_d=d+r-b\ge d-b=m+1+(m-b)>m,
\]
whereas if \(r\ge b\), then \(\langle r-b\rangle_d=r-b\le m\).  Also,
\(0\le r+b\le 2m=d-1\), so no reduction modulo \(d\) occurs in
\(\langle r+b\rangle_d\); hence
\[
  \langle r+b\rangle_d\le m
  \quad\Longleftrightarrow\quad
  r+b\le m
  \quad\Longleftrightarrow\quad
  r\le m-b .
\]
Thus in this case the three conditions are equivalent to
\[
  b\le r\le m-b.
\]
This gives the first line of \eqref{eq:Rb-def}; if \(b>\lfloor m/2\rfloor\),
the interval is empty.

It remains to consider the case \(b>m\).  Write \(b=d-c\), so
\(1\le c\le m\) and \(b\equiv -c\pmod d\).  Then
\eqref{eq:r-residue-conditions} is equivalent to
\[
  \langle r+c\rangle_d\le m,\qquad
  r\le m,\qquad
  \langle r-c\rangle_d\le m .
\]
As above, \(0\le r+c\le 2m=d-1\), so the first of these three conditions is
equivalent to \(r+c\le m\), that is, \(r\le m-c\).  The last condition is
equivalent to \(r\ge c\), since \(r<c\) would give
\(\langle r-c\rangle_d=d+r-c>m\), while \(r\ge c\) gives
\(\langle r-c\rangle_d=r-c\le m\).  Hence in this case the three conditions are
equivalent to
\[
  c\le r\le m-c.
\]
This gives the second line of \eqref{eq:Rb-def}; if
\(c>\lfloor m/2\rfloor\), the interval is empty.  These two cases exhaust
\(0\le b<d\), and the claim follows.

\begin{lem}\label{lem:block-cancel}
For every $\ell\ge0$,
\begin{equation}\label{eq:block-cancel}
  \sum_{j=0}^{d-1}\mathcal T_{s,s+\ell d+j}(\zeta)=0.
\end{equation}
\end{lem}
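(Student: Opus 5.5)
Let $k=s+\ell d+j$ with $0\le j\le d-1$. Then $k$ runs over a complete residue system modulo $d$. By \eqref{eq:T-nonzero-condition} and the characterization \eqref{eq:Rb-def}, only the terms with $r=\langle k\rangle_d\in\mathcal R_b$ can be nonzero, so the block sum reduces to a sum over $r\in\mathcal R_b$. The plan is to evaluate each surviving term with Lemma~\ref{lem:period}, and then pair $r$ with $m-r$. The interval $\mathcal R_b$ is symmetric under $r\mapsto m-r$, and Lemma~\ref{lem:reflection} should show that paired terms are negatives of each other.

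\textbf{Step 1: the period quotients are constant on the surviving terms.} Suppose first that $0\le b\le\lfloor m/2\rfloor$ and $r\in[b,m-b]$. Then $j=r-b$, and
\[
k-s=\ell d+(r-b),\qquad k=(a+\ell)d+r,\qquad k+s=(2a+\ell)d+(r+b),
\]
with all three remainders in $[0,m]$. In the other case $b=d-c$ with $r\in[c,m-c]$, we have $s=(a+1)d-c$ and $j=r+c$. This gives
\[
k-s=\ell d+(r+c),\qquad k=(a+\ell+1)d+r,\qquad k+s=(2a+\ell+2)d+(r-c).
\]
In either case, Lemma~\ref{lem:period} gives
\[
\mathcal T_{s,k}(\zeta)=\Lambda\cdot(-1)^k[4r+1]_\zeta\,\zeta^k\,H_{t_1}(\zeta)H_r(\zeta)^2H_{t_3}(\zeta)K_r(\zeta).
\]
Here $\{t_1,t_3\}=\{r-e,\,r+e\}$ with $e=b$ or $e=c$. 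The constant $\Lambda$ is a product of $C_u$'s that does not depend on $r$. This is the point where the block structure matters.

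\textbf{Step 2: reflection.} Replace $r$ by $r'=m-r$, which again lies in $\mathcal R_b$. The corresponding $k'$ satisfies $k'-k=m-2r$, with the same $\ell$. The pair $\{r-e,r+e\}$ becomes $\{m-(r+e),\,m-(r-e)\}$, so Lemma~\ref{lem:reflection} applies factor by factor. Collecting the contributions to the ratio of the $r'$-term to the $r$-term:
\begin{itemize}
\item $H_{t_1}H_{t_3}$ contributes $(-1)^{2m}\zeta^{2m^2+2r}$;
\item $H_r^2$ contributes $\zeta^{2m^2+2r}$;
\item $K_r$ contributes $(-1)^m\zeta^{2m^2+2r}$;
\item $[4r+1]_\zeta$ contributes $-\zeta^{-(4r+1)}$;
\item $(-1)^k\zeta^k$ contributes $(-1)^{m}\zeta^{m-2r}$.
\end{itemize}
The signs $(-1)^m$ cancel. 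The total exponent of $\zeta$ is
\[
6m^2+6r-4r-1+m-2r=6m^2+m-1=(2m+1)(3m-1),
\]
which is divisible by $d$. Hence the ratio is $-1$, that is, $\mathcal T_{s,k'}(\zeta)=-\mathcal T_{s,k}(\zeta)$.

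\textbf{Step 3: conclusion.} Summing over the pairs $\{r,m-r\}$ gives zero. If $m$ is even, the fixed point $r=m/2$ gives a term equal to its own negative, so that term is $0$. This proves \eqref{eq:block-cancel}.

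The main obstacle is the bookkeeping in Steps 1 and 2. In Step 1, one must check that none of the three arguments $k-s$, $k$, $k+s$ crosses a multiple of $d$ as $r$ varies over $\mathcal R_b$; otherwise the $C_u$ factors would differ between paired terms. In Step 2, one must get the reflection exponents right, since $H_{t_1}$ is reflected into $H_{m-t_3}$ rather than into $H_{m-t_1}$.
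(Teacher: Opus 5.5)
Your proof is correct and follows the paper's argument essentially step for step: you restrict to residues $r\in\mathcal R_b$, factor out the $r$-independent period constants via Lemma~\ref{lem:period}, and pair $r$ with $m-r$ using Lemma~\ref{lem:reflection} to get the ratio $-\zeta^{(3m-1)d}=-1$. The only difference is at the fixed point $r=m/2$: the paper observes directly that $[4r+1]_\zeta=[d]_\zeta=0$, while you deduce the vanishing from the multiplicative reflection identity $\mathcal T=-\mathcal T$, which is equally valid.
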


\pf
If $\mathcal R_b=\varnothing$, every term in the block vanishes by
\eqref{eq:T-nonzero-condition}, and there is nothing to prove.

Suppose first that $0\le b\le\lfloor m/2\rfloor$.  The possibly nonzero terms in
the block have
\begin{equation*}
  k=(a+\ell)d+r,
  \qquad b\le r\le m-b.
\end{equation*}
The reflection $r\mapsto r^*=m-r$ preserves this interval.  If $r=r^*$, then
$4r+1=d$, so $[4r+1]_{\zeta}=0$ and the corresponding summand is zero.  For
$r\ne r^*$ we compare the reflected term with the original one.  For
$k=(a+\ell)d+r$, the relevant indices are
\begin{equation*}
  k-s=\ell d+(r-b),
  \qquad
  k+s=(2a+\ell)d+(r+b),
  \qquad
  k=(a+\ell)d+r.
\end{equation*}
After reflection,
\begin{equation*}
  r^*-b=m-(r+b),
  \qquad
  r^*+b=m-(r-b).
\end{equation*}
Thus the complete-period constants from Lemma~\ref{lem:period} cancel in the
quotient of the reflected term by the original term.  The residual factors are
computed from Lemma~\ref{lem:reflection}:
\begin{equation*}
\frac{H_{r^*-b}(\zeta)H_{r^*+b}(\zeta)}
     {H_{r-b}(\zeta)H_{r+b}(\zeta)}
=\zeta^{2m^2+2r},
\end{equation*}
\begin{equation*}
\frac{H_{r^*}(\zeta)^2}{H_r(\zeta)^2}=\zeta^{2m^2+2r},
\qquad
\frac{K_{r^*}(\zeta)}{K_r(\zeta)}=(-1)^m\zeta^{2m^2+2r}.
\end{equation*}
Together with
\begin{equation*}
(-1)^{r^*-r}=(-1)^m,
\qquad
\frac{\zeta^{r^*}}{\zeta^r}=\zeta^{m-2r},
\qquad
\frac{[4r^*+1]_{\zeta}}{[4r+1]_{\zeta}}
=-\zeta^{-(4r+1)},
\end{equation*}
these identities give
\begin{align}
\frac{\mathcal T_{s,(a+\ell)d+r^*}(\zeta)}
     {\mathcal T_{s,(a+\ell)d+r}(\zeta)}
&=(-1)^m\left(-\zeta^{-(4r+1)}\right)
\zeta^{m-2r}\zeta^{2m^2+2r}
\zeta^{2m^2+2r}\left((-1)^m\zeta^{2m^2+2r}\right)\notag\\
&=-\zeta^{6m^2+m-1}=-1,
\label{eq:block-case1-ratio}
\end{align}
because
\begin{equation*}
  6m^2+m-1=(3m-1)(2m+1)=(3m-1)d.
\end{equation*}
Thus all nonzero terms cancel in pairs, with any fixed point equal to zero.

Now suppose $b=d-c$, where $1\le c\le\lfloor m/2\rfloor$.  The possibly nonzero
terms in the block have
\begin{equation*}
  k=(a+\ell+1)d+r,
  \qquad c\le r\le m-c.
\end{equation*}
Again $r^*=m-r$ preserves the interval, and a fixed point contributes zero.  For
$r\ne r^*$, the indices are
\begin{equation*}
  k-s=\ell d+(r+c),
  \qquad
  k+s=(2a+\ell+2)d+(r-c),
  \qquad
  k=(a+\ell+1)d+r.
\end{equation*}
After reflection,
\begin{equation*}
  r^*+c=m-(r-c),
  \qquad
  r^*-c=m-(r+c).
\end{equation*}
The complete-period constants again cancel, and the residual calculation is the
same as in \eqref{eq:block-case1-ratio}.  Therefore
\begin{equation*}
\mathcal T_{s,(a+\ell+1)d+r^*}(\zeta)
=-\mathcal T_{s,(a+\ell+1)d+r}(\zeta).
\end{equation*}
This proves \eqref{eq:block-cancel}.
\qed

\noindent{\it Proof of Theorem~\ref{thm:guo41}.}
Let
\begin{equation*}
  S_M(q)=\sum_{k=s}^{M}\mathcal T_{s,k}(q).
\end{equation*}
It is enough to prove $S_M(\zeta)=0$ for every primitive $d$-th root $\zeta$ with
$d\mid n$ and $d>1$.  Indeed, Lemma~\ref{lem:period} shows that every summand has
a finite limit at such a root of unity.  Lemma~\ref{lem:finite} then implies
that the reduced denominator of $S_M(q)$ is not divisible by $\Phi_d(q)$, and
that its numerator is divisible by $\Phi_d(q)$ once $S_M(\zeta)=0$ for all
primitive $d$-th roots $\zeta$.

Write
\begin{equation*}
  n=Nd,
\end{equation*}
where $N$ is odd, and keep the notation $s=ad+b$, $0\le b<d$, from
\eqref{eq:s-adb}.  Put
\begin{equation*}
  L=M-s.
\end{equation*}
Then
\begin{equation*}
  S_M(\zeta)=\sum_{j=0}^{L}\mathcal T_{s,s+j}(\zeta).
\end{equation*}
By Lemma~\ref{lem:block-cancel}, every complete block of $d$ consecutive values
of $j$ contributes zero.  Write
\[
  L=Qd+R,\qquad 0\le R<d.
\]
It remains only to examine the final block \(0\le j-Qd\le R\).

We next rewrite the nonzero condition in terms of the residue of the summation
shift \(j\).  In a fixed block, write
\[
  j=\ell d+t,\qquad 0\le t<d .
\]
Since \(k=s+j\) and \(s\equiv b\pmod d\), if \(r=\langle k\rangle_d\), then
\[
  r\equiv b+t\pmod d,
  \qquad\text{or equivalently}\qquad
  t=\langle r-b\rangle_d .
\]
Thus a residue \(t\) of \(j\) can give a nonzero summand only when
\(r\in\mathcal R_b\) in \eqref{eq:Rb-def}, and in that case
\(t=\langle r-b\rangle_d\).

Let \(\mathcal J_b\) denote this set of possible residues \(t=\langle j\rangle_d\).
It is obtained from \(\mathcal R_b\) by subtracting \(b\) modulo \(d\).  More
explicitly, if \(0\le b\le\lfloor m/2\rfloor\), then
\[
  r=b+u,\qquad 0\le u\le m-2b,
\]
and hence \(t=\langle r-b\rangle_d=u\).  If \(b=d-c\) with
\(1\le c\le\lfloor m/2\rfloor\), then
\[
  r\in\{c,c+1,\ldots,m-c\}
\]
and
\[
  t=\langle r-b\rangle_d=\langle r+c\rangle_d=r+c,
\]
because \(2c\le r+c\le m<d\).  Therefore the possible nonzero residues of \(j\)
are
\begin{equation}\label{eq:Jb-def}
\mathcal J_b=
\begin{cases}
\{0,1,\ldots,m-2b\},&
0\le b\le \lfloor m/2\rfloor,\\[2mm]
\{2c,2c+1,\ldots,m\},&
c=d-b,\ 1\le c\le \lfloor m/2\rfloor,\\[2mm]
\varnothing,&\text{otherwise}.
\end{cases}
\end{equation}
Thus only residues in \(\mathcal J_b\) can contribute to the last block.  If
\(\{0,1,\ldots,R\}\) contains all of \(\mathcal J_b\), then the last-block
contribution is the same as the full nonzero support of a complete block, and is
therefore zero by Lemma~\ref{lem:block-cancel}.  If \(\{0,1,\ldots,R\}\) is
disjoint from \(\mathcal J_b\), then every term in the last block is zero.

We now check the three choices of $M$ in \eqref{eq:M-values}.

First, let $M=(n-1)/2$.  Then
\begin{equation*}
L=\frac{n-1}{2}-s=\left(\frac{N-1}{2}-a\right)d+m-b.
\end{equation*}
The final residue bound is $R=m-b$ if $b\le m$, and $R=m+c$ if $b=d-c>m$.  In
the first case $R=m-b\ge m-2b$, so the prefix $\{0,1,\ldots,R\}$ contains the
whole first set in \eqref{eq:Jb-def}.  In the second case $R=m+c\ge m$, so it
contains the whole second set in \eqref{eq:Jb-def}.  Thus the final incomplete
block contributes zero.

Second, let $M=n-s-1$.  Then
\begin{equation*}
  L=n-2s-1=Nd-2ad-2b-1.
\end{equation*}
If $b=0$, the final residue bound is $R=d-1$, so the whole block is present.  If
$1\le b\le m$, then
\begin{equation*}
  R=d-2b-1=2m-2b\ge m-2b,
\end{equation*}
so the prefix contains the whole first set in \eqref{eq:Jb-def}.  If $b=d-c>m$,
then
\begin{equation*}
  R=2c-1,
\end{equation*}
whereas the second set in \eqref{eq:Jb-def}, when nonempty, begins at $2c$.
Thus the prefix contains none of the possible nonzero residues.  Again the final
contribution is zero.

Third, let $M=n-1$.  Then
\begin{equation*}
  L=n-s-1=Nd-ad-b-1.
\end{equation*}
If $b=0$, the final residue bound is $R=d-1$.  If $1\le b\le m$, then
\begin{equation*}
  R=d-b-1=2m-b\ge m-2b,
\end{equation*}
so the prefix contains the whole first set in \eqref{eq:Jb-def}.  If $b=d-c>m$,
then
\begin{equation*}
  R=c-1,
\end{equation*}
whereas the second set in \eqref{eq:Jb-def}, when nonempty, begins at $2c$.
Hence the prefix contains none of the possible nonzero residues.  The final
contribution is zero.

We have shown that $S_M(\zeta)=0$ for every primitive $d$-th root of unity with
$d\mid n$ and $d>1$.  Consequently $\Phi_d(q)$ divides the numerator of the
reduced form of $S_M(q)$ for every such $d$, while its denominator is relatively
prime to every such $\Phi_d(q)$.  Since the cyclotomic polynomials are pairwise
coprime and
\begin{equation*}
  [n]=\prod_{\substack{d\mid n\\ d>1}}\Phi_d(q),
\end{equation*}
the reduced numerator of $S_M(q)$ is divisible by $[n]$ and the reduced
denominator is relatively prime to $[n]$.  This proves \eqref{eq:thm2-main}.
\qed

\end{document}